\newtheorem{thm}{Theorem}
\newtheorem{lem}{Lemma}
\newtheorem{defn}{Definition}
\begin{document}
\bibliographystyle{plain}
\title[Constructive proof of the existence of Nash Equilibrium]{Constructive proof of the existence of Nash Equilibrium in a finite strategic game with sequentially locally non-constant payoff functions by Sperner's lemma}
\author{Yasuhito Tanaka}
\address{Faculty of Economics, Doshisha University, Kamigyo-ku, Kyoto, 602-8580, Japan}
\email{yasuhito@mail.doshisha.ac.jp}

\date{}

\begin{abstract}
Using Sperner's lemma for modified partition of a simplex we will constructively prove the existence of a Nash equilibrium in a finite strategic game with sequentially locally non-constant payoff functions. We follow the Bishop style constructive mathematics.
\end{abstract}

\keywords{Nash equilibrium, Sperner's lemma, sequentially locally non-constant payoff functions}

\subjclass[2000]{Primary~26E40, Secondary~91A10}

\maketitle

\section{Introduction}
It is often said that Brouwer's fixed point theorem can not be constructively proved\footnote{\cite{kel} provided a \emph{constructive} proof of Brouwer's fixed point theorem. But it is not constructive from the view point of constructive mathematics \'{a} la Bishop. It is sufficient to say that one dimensional case of Brouwer's fixed point theorem, that is, the intermediate value theorem is non-constructive. See \cite{br} or \cite{da}. On the other hand, in \cite{orevkov} Orevkov constructed a computably coded continuous function $f$ from the unit square to itself, which is defined at each computable point of the square, such that $f$ has no computable fixed point. His map consists of a retract of the computable elements of the square to its boundary followed by a rotation of the boundary of the square. As pointed out by Hirst in \cite{hirst}, since there is no retract of the square to its boundary, Orevkov's map does not have a total extension.

Brouwer's fixed point theorem can be constructively, in the sense of constructive mathematics \'{a} la Bishop, proved only approximately. But the existence of an exact fixed point of a function which satisfies some property of local non-constancy may be constructively proved.}. Thus, the existence of a Nash equilibrium in a finite strategic game also can not be constructively proved. Sperner's lemma which is used to prove Brouwer's theorem, however, can be constructively proved. Some authors have presented a constructive (or an approximate) version of Brouwer's theorem using Sperner's lemma. See  \cite{da} and \cite{veld}. Thus, Brouwer's fixed point theorem can be constructively proved in its constructive version. Also Dalen in \cite{da} states a conjecture that a uniformly continuous function $f$ from a simplex to itself, with property that each open set contains a point $x$ such that $x\neq f(x)$, which means $|x-f(x)|>0$, and also at every point $x$ on the boundaries of the simplex $x\neq f(x)$, has an exact fixed point. We call such a property of functions \emph{local non-constancy}. Further we define a stronger property \emph{sequential local non-constancy}. In another paper \cite{ta1} we have constructively proved Dalen's conjecture with sequential local non-constancy. 

In this paper we present a proof of the existence of a Nash equilibrium in a finite strategic game with sequentially locally non-constant payoff functions by Sperner's lemma. We consider Sperner's lemma for modified partition of a simplex, and utilizing it prove the existence of such a Nash equilibrium.

In the next section we prove a modified version of Sperner's lemma. In Section 3 we present a proof of the existence of a Nash equilibrium in a finite strategic game with sequentially locally non-constant payoff functions by the modified version of Sperner's lemma. We follow the Bishop style constructive mathematics according to \cite{bb}, \cite{br} and \cite{bv}.

\section{Sperner's lemma}\label{sec2}
To prove Sperner's lemma we use the following simple result of graph theory, Handshaking lemma\footnote{For another constructive proof of Sperner's lemma, see \cite{su}. }. A \emph{graph} refers to a collection of vertices and a collection of edges that connect pairs of vertices. Each graph may be undirected or directed. Figure \ref{graph} is an example of an undirected graph. Degree of a vertex of a graph is defined to be the number of edges incident to the vertex, with loops counted twice. Each vertex has odd degree or even degree. Let $v$ denote a vertex and $V$ denote the set of all vertices.
\begin{lem}[Handshaking lemma]
Every undirected graph contains an even number of vertices of odd degree. That is, the number of vertices that have an odd number of incident edges must be even.
\end{lem}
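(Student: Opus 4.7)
The plan is to carry out a double-counting argument on the incidence relation between edges and vertices. I would first form the sum of degrees $\sum_{v\in V}\deg(v)$. Because every edge has exactly two endpoint-incidences (a loop contributes both incidences to its single endpoint, which matches the convention stated in the excerpt of counting loops twice), each edge contributes exactly $2$ to this sum. Hence
\[
\sum_{v\in V}\deg(v)=2|E|,
\]
where $|E|$ denotes the number of edges. In particular this total is an even natural number.

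Next I would split the vertex set into the disjoint subsets $V_{\mathrm{odd}}=\{v\in V:\deg(v)\text{ is odd}\}$ and $V_{\mathrm{even}}=\{v\in V:\deg(v)\text{ is even}\}$. This split is legitimate even constructively, since the graph is finite and parity of a natural number is decidable. From
\[
\sum_{v\in V_{\mathrm{odd}}}\deg(v)+\sum_{v\in V_{\mathrm{even}}}\deg(v)=2|E|,
\]
and since the second summand on the left is a sum of even numbers and the right-hand side is even, we conclude that $\sum_{v\in V_{\mathrm{odd}}}\deg(v)$ is even. But a finite sum of odd numbers has the same parity as the number of summands, so $|V_{\mathrm{odd}}|$ must be even, which is exactly the conclusion.

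There is essentially no obstacle here: the argument is finitary, uses only elementary arithmetic on natural numbers, and all the case distinctions (odd versus even degree) are decidable because the graph is finite. So the proof goes through without issue in the Bishop-style constructive framework adopted by the paper.
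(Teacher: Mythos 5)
Your proof is correct and follows essentially the same route as the paper's: a double count of vertex--edge incidences giving $\sum_{v\in V}d(v)=2e$, followed by the parity observation that the even total forces an even number of odd-degree vertices. You simply spell out the final parity step (splitting $V$ into odd- and even-degree vertices) more explicitly than the paper does.
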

This is a simple lemma. But for completeness of arguments we provide a proof.
\begin{proof}
Prove this lemma by double counting. Let $d(v)$ be the degree of vertex $v$. The number of vertex-edge incidences in the graph may be counted in two different ways; by summing the degrees of the vertices, or by counting two incidences for every edge. Therefore,
\[\sum_{v\in V}d(v)=2e,\]
where $e$ is the number of edges in the graph. The sum of the degrees of the vertices is therefore an even number. It could happen if and only if an even number of the vertices had odd degree.
\end{proof}

\begin{figure}[tpb]
\begin{center}
\includegraphics[height=5cm]{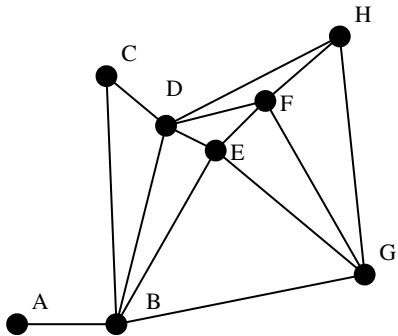}
\end{center}
	\vspace*{-.3cm}
	\caption{Example of graph}
	\label{graph}
\end{figure}

Let $\Delta$ denote an $n$-dimensional simplex. $n$ is a finite natural number. For example, a 2-dimensional simplex is a triangle.  Let partition or triangulate a simplex. Figure \ref{tria1} is an example of partition (triangulation) of a 2-dimensional simplex. In a 2-dimensional case we divide each side of $\Delta$ in $m$ equal segments, and draw the lines parallel to the sides of $\Delta$. Then, the 2-dimensional simplex is partitioned into $m^2$ triangles. We consider partition of $\Delta$ inductively for cases of higher dimension. In a 3 dimensional case each face of $\Delta$ is a 2-dimensional simplex, and so it is partitioned into $m^2$ triangles in the above mentioned way, and draw the planes parallel to the faces of $\Delta$. Then, the 3-dimensional simplex is partitioned into $m^3$ trigonal pyramids. And similarly for cases of higher dimension.

\begin{figure}[tpb]
\begin{center}
\includegraphics[height=7.5cm]{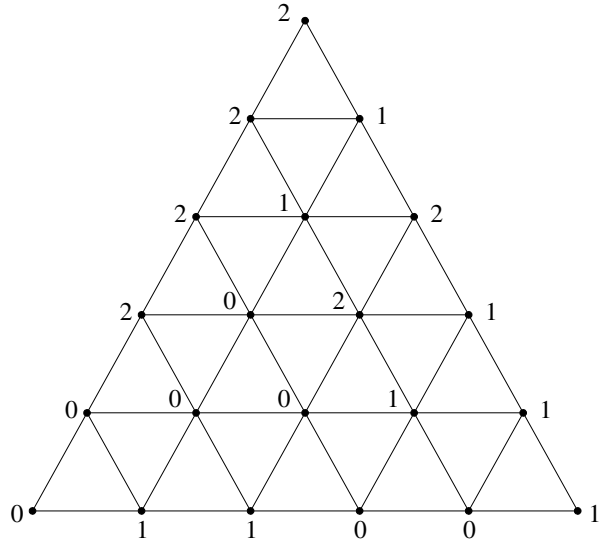}
\end{center}
	\vspace*{-.3cm}
	\caption{Partition and labeling of 2-dimensional simplex}
	\label{tria1}
\end{figure}

Let $K$ denote the set of small $n$-dimensional simplices of $\Delta$ constructed by partition. Vertices of these small simplices of $K$ are labeled with the numbers 0, 1, 2, $\dots$, $n$ subject to the following rules.
\begin{enumerate}
\item The vertices of $\Delta$ are respectively labeled with 0 to $n$. We label a point $(1,0, \dots, 0)$ with 0, a point $(0,1,0, \dots, 0)$ with 1, a point $(0,0,1 \dots, 0)$ with 2, $\dots$, a point $(0,\dots, 0,1)$ with $n$. That is, a vertex whose $k$-th coordinate ($k=0, 1, \dots, n$) is $1$ and all other coordinates are 0 is labeled with $k$ for all $k\in \{0, 1, \dots, n\}$. 

\item If a vertex of $K$ is contained in an $n-1$-dimensional face of $\Delta$, then this vertex is labeled with some number which is the same as the number of a vertex of that face.

\item If a vertex of $K$ is contained in an $n-2$-dimensional face of $\Delta$, then this vertex is labeled with some number which is the same as the number of a vertex of that face. And similarly for cases of lower dimension.

\item A vertex contained inside of $\Delta$ is labeled with an arbitrary number among 0, 1, $\dots$, $n$.
\end{enumerate}

Now we modify this partition of a simplex as follows;
\begin{quote}
Put a point in an open neighborhood around each vertex inside $\Delta$, and make partition of $\Delta$ replacing each vertex inside $\Delta$ by the point in each neighborhood. The diameter of each neighborhood should be sufficiently small relative to the size of each small simplex. We label the points in $\Delta$ following the rules (1) $\sim$ (4).
\end{quote}
Then we obtain a partition of $\Delta$ illustrated in Figure \ref{tria12}.

We further modify this partition as follows;
\begin{quote}
Put a point in an open neighborhood around each vertex on a face (boundary) of $\Delta$, and make partition of $\Delta$ replacing each vertex on the face by that point in each neighborhood, and we label the points in $\Delta$ following the rules (1) $\sim$ (4). This neighborhood is open in a space with dimension lower than $n$.
\end{quote}
Then, we obtain a partition of $\Delta$ depicted in Figure \ref{tria13}.

\begin{figure}[tpb]
\begin{center}
\includegraphics[height=7.5cm]{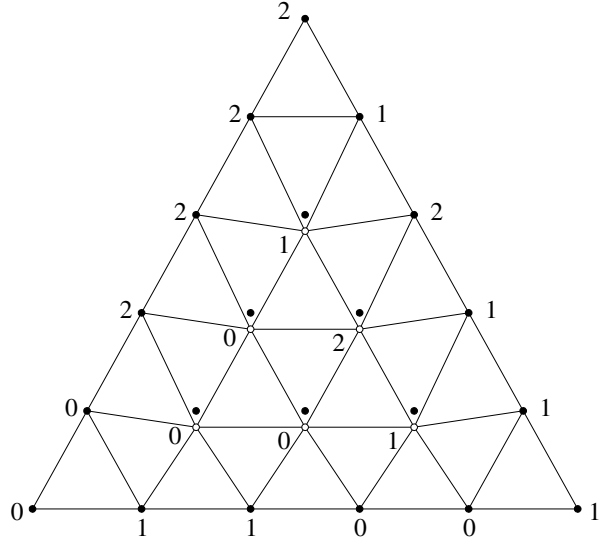}
\end{center}
	\vspace*{-.3cm}
	\caption{Modified partition of a simplex}
	\label{tria12}
\end{figure}

A small simplex of $K$ in this modified partition which is labeled with the numbers 0, 1, $\dots$, $n$ is called a \emph{fully labeled simplex}. Now let us prove Sperner's lemma about the modified partition of a simplex.
\begin{lem}[Sperner's lemma]
If we label the vertices of $K$ following above rules (1) $\sim$ (4), then there are an odd number of fully labeled simplices. Thus, there exists at least one fully labeled simplex. \label{l2}
\end{lem}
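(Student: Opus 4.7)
The plan is to proceed by induction on the dimension $n$, using the Handshaking lemma to count fully labeled simplices. The combinatorial structure of the partition is not altered by the modification (points are only perturbed within small neighborhoods, and the labels remain governed by rules (1)–(4)), so the standard counting argument carries over.

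For the base case $n=1$, the simplex $\Delta$ is a segment with endpoints labeled $0$ and $1$. Walking from one endpoint to the other, the label must change from $0$ to $1$, and since it ends at $1$, the number of segments whose endpoints carry distinct labels — exactly the fully labeled $1$-simplices — is odd. For the inductive step, assume the lemma for dimension $n-1$. Build an undirected graph $G$ whose vertices are the small $n$-simplices of $K$ together with one extra vertex $v^*$ representing the exterior of $\Delta$. Join two vertices of $G$ by an edge whenever the corresponding simplices share an $(n-1)$-face whose vertex labels are exactly the set $\{0,1,\ldots,n-1\}$; edges to $v^*$ correspond to $(n-1)$-faces on the boundary of $\Delta$ carrying those labels.

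Two combinatorial observations drive the argument. First, a small $n$-simplex $\sigma$ has odd degree in $G$ iff it is fully labeled: if $\sigma$ carries each label in $\{0,\ldots,n\}$ exactly once, then precisely one of its $(n-1)$-faces omits the label $n$ and thus contributes exactly one edge; if $\sigma$ carries all labels in $\{0,\ldots,n-1\}$ but repeats one of them (missing $n$), then exactly two of its faces contribute edges; in all other cases no face contributes. So odd degree among simplex-vertices corresponds bijectively to full labeling. Second, by labeling rules (1)–(3), an $(n-1)$-face with labels $\{0,\ldots,n-1\}$ on the boundary of $\Delta$ must lie on the unique face $F$ of $\Delta$ spanned by the vertices labeled $0,1,\ldots,n-1$. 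Restricting the partition and labeling to $F$ yields precisely a modified partition and Sperner labeling of an $(n-1)$-simplex, so by the inductive hypothesis the number of such boundary faces is odd; hence $\deg(v^*)$ is odd.

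Applying the Handshaking lemma to $G$, the set of odd-degree vertices has even cardinality. Since $v^*$ is one such vertex, the number of remaining odd-degree vertices — i.e. fully labeled small $n$-simplices — must also be odd, completing the induction. The main obstacle is the careful bookkeeping in observation one, where the possibility that an interior face carries labels $\{0,\ldots,n-1\}$ must be tracked separately from boundary faces; the perturbations of interior and boundary vertices permitted in the modified partition do not affect this bookkeeping because each vertex remains in a neighborhood small enough that its incidence pattern with surrounding small simplices is unchanged.
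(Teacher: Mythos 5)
Your proof is correct and follows essentially the same route as the paper's: induction on dimension, a dual graph with one exterior node whose edges are the $(n-1)$-faces labeled $\{0,1,\dots,n-1\}$, the degree-parity observation that a small simplex has odd degree iff it is fully labeled, and the Handshaking lemma to conclude. Your explicit remarks on why boundary fully-labeled faces must lie in the face of $\Delta$ spanned by the vertices labeled $0,\dots,n-1$, and on why the modified (perturbed) partition does not change the combinatorics, are slightly more careful than the paper's own exposition but do not change the argument.
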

\begin{proof}
See Appendix \ref{app1}.
\end{proof}

\begin{figure}[tpb]
\begin{center}
\includegraphics[height=8cm]{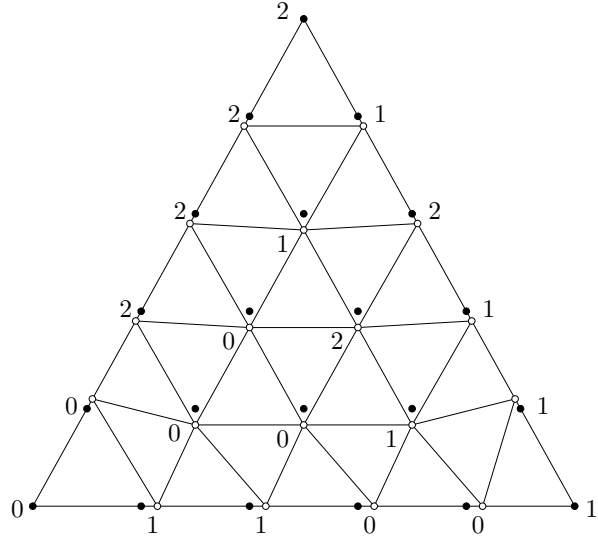}
\end{center}
	\vspace*{-.3cm}
	\caption{Modified partition of a simplex: Two}
	\label{tria13}
\end{figure}

\section{Nash equilibrium in strategic game}\label{sec4}

Let $\mathbf{p}=(\mathbf{p}_0, \mathbf{p}_1, \dots, \mathbf{p}_n)$ be a point in an $n$-dimensional simplex $\Delta$, and consider a function $\varphi$ from $\Delta$ to itself. Denote the $i$-th components of $\mathbf{p}$ and $\varphi(\mathbf{p})$ by $\mathbf{p}_i$ and $\varphi_i(\mathbf{p})$ or $\varphi_i$.

The definition of local non-constancy of functions is as follows;
\begin{defn}(\textit{Local non-constancy of functions})
\begin{enumerate}
	\item At a point $\mathbf{p}$ on the faces (boundaries) of a simplex $\varphi(\mathbf{p})\neq \mathbf{p}$. This means $\varphi_i(\mathbf{p})>\mathbf{p}_i$ or $\varphi_i(\mathbf{p})<\mathbf{p}_i$ for at least one $i$.
	\item In any open set in $\Delta$ there exists a point $\mathbf{p}$ such that $\varphi(\mathbf{p})\neq \mathbf{p}$.
\end{enumerate}
\end{defn}

Next we define modified local non-constancy of functions as follows;
\begin{defn}(\textit{Modified local non-constancy of functions})
\begin{enumerate}
\item At the vertices of a simplex $\Delta$ $\varphi(\mathbf{p})\neq \mathbf{p}$.
\item In any open set contained in the faces of $\Delta$ there exists a point $\mathbf{p}$ such that $\varphi(\mathbf{p})\neq \mathbf{p}$. This open set is open in a space with dimension lower than $n$.
	\item In any open set in $\Delta$ there exists a point $\mathbf{p}$ such that $\varphi(\mathbf{p})\neq \mathbf{p}$.
\end{enumerate}
\end{defn}

(2) of the modified local non-constancy implies that every vertex $\mathbf{p}$ in a partition of a simplex, for example, as illustrated by white circles in Figure \ref{tria13} in a 2-dimensional case, can be selected to satisfy $\varphi(\mathbf{p})\neq \mathbf{p}$ even when points on the faces of $\Delta$ (black circles on the edges) do not necessarily satisfy this condition. Even if a function $\varphi$ does not strictly satisfy the local non-constancy so long as it satisfies the modified local non-constancy, we can partition $\Delta$ to satisfy the conditions for Sperner's lemma.

Further, by reference to the notion of \emph{sequentially at most one maximum} in \cite{berg},  we define the property of \emph{sequential local non-constancy}.

First we recapitulate the compactness (total boundedness with completeness) of a set in constructive mathematics. $\Delta$ is compact in the sense that for each $\varepsilon>0$ there exists a finitely enumerable $\varepsilon$-approximation to $\Delta$\footnote{A set $S$ is finitely enumerable if there exist a natural number $N$ and a mapping of the set $\{1, 2, \dots, N\}$ onto $S$.}. An $\varepsilon$-approximation to $\Delta$ is a subset of $\Delta$ such that for each $\mathbf{p}\in \Delta$ there exists $\mathbf{q}$ in that $\varepsilon$-approximation with $|\mathbf{p}-\mathbf{q}|<\varepsilon$. Each face (boundary) of $\Delta$ is also a simplex, and so it is compact. According to Corollary 2.2.12 of \cite{bv} we have the following result.
\begin{lem}
For each $\varepsilon>0$ there exist totally bounded sets $H_1, H_2, \dots, H_n$, each of diameter less than or equal to $\varepsilon$, such that $\Delta=\cup_{i=1}^nH_i$.\label{closed}
\end{lem}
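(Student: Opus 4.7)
The plan is to exploit the total boundedness of $\Delta$ directly. Given $\varepsilon>0$, compactness supplies a finitely enumerable $(\varepsilon/2)$-approximation $\{q_1,\ldots,q_N\}$ of $\Delta$. Define
\[
H_k=\{\mathbf{p}\in\Delta : |\mathbf{p}-q_k|\leq \varepsilon/2\},\qquad k=1,\ldots,N.
\]
The triangle inequality gives $\mathrm{diam}(H_k)\leq \varepsilon$, and the defining property of an $(\varepsilon/2)$-approximation gives $\Delta=\bigcup_{k=1}^N H_k$. Relabeling yields the decomposition asserted in the statement (the index $n$ appearing in the lemma should be read as an abstract count rather than the dimension of $\Delta$).

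The non-trivial part, and the step I would expect to be the main obstacle, is verifying that each $H_k$ is itself totally bounded. Constructively the inequality $|\mathbf{p}-q_k|\leq \varepsilon/2$ is not decidable on $\Delta$, so $H_k$ is not a priori a located subset, and one cannot simply invoke ``a closed ball inside a totally bounded set is totally bounded.'' To get around this I would mimic the construction underlying Corollary 2.2.12 of \cite{bv}: for each prescribed $\delta>0$, choose an $\eta$-approximation $F_\eta\subseteq\Delta$ of $\Delta$ with $\eta$ small relative to $\delta$, and extract the subfamily $F_\eta^{(k)}$ of those points of $F_\eta$ lying within distance $\varepsilon/2+\eta$ of $q_k$. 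A direct estimate then shows that every point of $H_k$ is within $\delta$ of some element of $F_\eta^{(k)}$, so $F_\eta^{(k)}$ is a finite $\delta$-approximation of $H_k$. Since $\delta>0$ was arbitrary, $H_k$ is totally bounded.

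Combining the two steps completes the proof and reproduces the decomposition given by Corollary 2.2.12 of \cite{bv} that the author cites. The only care needed is to keep the margin $\eta$ strictly smaller than the buffer one allows when retaining grid points, so that no point of $H_k$ is missed while the diameter bound $\varepsilon$ is still preserved after passing to the limit.
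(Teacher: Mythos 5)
The covering and diameter estimates are fine, but the total-boundedness step --- which you correctly identify as the crux --- does not close as written. The family $F_\eta^{(k)}$ you extract consists of points of $F_\eta$ within $\varepsilon/2+\eta$ of $q_k$; such points need not lie in $H_k$ (they may sit at distance between $\varepsilon/2$ and $\varepsilon/2+\eta$ from $q_k$). Under the definition of approximation used in the paper (and in Bishop-style mathematics generally), a $\delta$-approximation to $H_k$ must be a \emph{subset} of $H_k$, so what you have produced is only an external $\delta$-net: it shows that $H_k$ is contained in a finite union of small balls, not that $H_k$ is totally bounded as a metric space. Converting an external net into an internal one requires deciding, for each $y\in F_\eta^{(k)}$, whether some point of $H_k$ lies near $y$, and that is exactly the locatedness of $H_k$ which you acknowledge is not available. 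This is not a pedantic point: for a general totally bounded space the fixed-radius set $\{\mathbf{p}:|\mathbf{p}-q_k|\leq\varepsilon/2\}$ can genuinely fail to be located, hence fail to be totally bounded, and this is precisely why Corollary 2.2.12 of \cite{bv} --- which is the paper's entire proof, by citation --- does \emph{not} use a fixed radius. The standard argument instead selects, for each $k$, a radius $r_k$ in an interval such as $(\varepsilon/3,\varepsilon/2)$ for which the sub-level set $\{\mathbf{p}:|\mathbf{p}-q_k|\leq r_k\}$ is totally bounded (such good radii exist because $\mathbf{p}\mapsto|\mathbf{p}-q_k|$ is uniformly continuous on a totally bounded set); the covering still holds because the $q_k$ form an $(\varepsilon/3)$-approximation, and the diameter bound $2r_k\leq\varepsilon$ survives.

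For the particular space at hand your construction can be rescued, because $\Delta$ is convex: each $y\in F_\eta^{(k)}$ can be pushed radially toward $q_k$ onto a point $z_y\in H_k$ with $|z_y-y|<\eta$ (the segment from $y$ to $q_k$ stays in $\Delta$, and the pushing map can be written down uniformly without case distinctions on $|y-q_k|$), after which $\{z_y\}$ is a genuine $2\eta$-approximation to $H_k$ from within. But that repair uses the geometry of the simplex and does not appear in your argument; as written, the assertion that $F_\eta^{(k)}$ is a $\delta$-approximation of $H_k$ is the gap, and the claim that this ``reproduces'' the construction behind Corollary 2.2.12 of \cite{bv} misdescribes what that corollary actually does.
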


The definition of sequential local non-constancy is as follows;
\begin{defn}(\textit{Sequential local non-constancy of functions})
\begin{enumerate}
\item At the vertices of a simplex $\Delta$ $\varphi(\mathbf{p})\neq \mathbf{p}$.

\item Let $\partial \Delta$ be a face of $\Delta$. There exists $\bar{\varepsilon}$ with the following property. For each $\varepsilon>0$ less than $\bar{\varepsilon}$ there exist totally bounded sets $H_1, H_2, \dots, H_m$, each of diameter less than or equal to $\varepsilon$, such that $\partial \Delta=\cup_{i=1}^m H_i$, and if for all sequences $(\mathbf{p}_n)_{n\geq 1}$, $(\mathbf{q}_n)_{n\geq 1}$ in each $H_i$, $|\varphi(\mathbf{p}_n)-\mathbf{p}_n|\longrightarrow 0$ and $|\varphi(\mathbf{q}_n)-\mathbf{q}_n|\longrightarrow 0$, then $|\mathbf{p}_n-\mathbf{q}_n|\longrightarrow 0$.

\item There exists $\bar{\varepsilon}$ with the following property. For each $\varepsilon>0$ less than $\bar{\varepsilon}$ there exist totally bounded sets $H_1, H_2, \dots, H_m$, each of diameter less than or equal to $\varepsilon$, such that $\Delta=\cup_{i=1}^m H_i$, and if for all sequences $(\mathbf{p}_n)_{n\geq 1}$, $(\mathbf{q}_n)_{n\geq 1}$ in each $H_i$, $|\varphi(\mathbf{p}_n)-\mathbf{p}_n|\longrightarrow 0$ and $|\varphi(\mathbf{q}_n)-\mathbf{q}_n|\longrightarrow 0$, then $|\mathbf{p}_n-\mathbf{q}_n|\longrightarrow 0$.
\end{enumerate}\label{sln}
\end{defn}
(1) of this definition is the same as that of the definition of modified local non-constancy. 

Now we show the following two lemmas.
\begin{lem}
Sequential local non-constancy means modified local non-constancy.\label{seq}
\end{lem}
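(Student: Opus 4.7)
My plan is to show that each clause of modified local non-constancy follows from the corresponding clause of sequential local non-constancy. Clause (1) is identical in the two definitions, so nothing is required there; the argument for clause (2) is obtained by repeating the argument below with $\Delta$ replaced by a face $\partial\Delta$, so I focus on the implication (3)$\Rightarrow$(3).

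Let $U\subseteq\Delta$ be an arbitrary open set; the goal is to find $\mathbf{p}\in U$ with $\varphi(\mathbf{p})\neq\mathbf{p}$. Since $U$ is open I fix a ball $B(\mathbf{p}_0,\delta)\subseteq U$ with $\delta>0$. I will argue by contradiction, supposing that $\varphi(\mathbf{p})=\mathbf{p}$ for every $\mathbf{p}\in U$. Using clause (3) of sequential local non-constancy I choose $\varepsilon$ with $0<\varepsilon<\min(\bar\varepsilon,\delta/2)$, together with the associated finite cover $\Delta=\bigcup_{i=1}^m H_i$ by totally bounded sets of diameter at most $\varepsilon$. By the triangle inequality, any $H_i$ that meets the smaller ball $B(\mathbf{p}_0,\delta/2)$ is entirely contained in $B(\mathbf{p}_0,\delta)\subseteq U$, and hence, under the contradiction hypothesis, every point of such an $H_i$ is a fixed point of $\varphi$.

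For any two points $\mathbf{p},\mathbf{q}$ of such an $H_i$, I feed the constant sequences $\mathbf{p}_n\equiv\mathbf{p}$ and $\mathbf{q}_n\equiv\mathbf{q}$ into the sequence-distinguishing hypothesis: both satisfy $|\varphi(\mathbf{p}_n)-\mathbf{p}_n|\to 0$ and $|\varphi(\mathbf{q}_n)-\mathbf{q}_n|\to 0$, so the hypothesis forces $|\mathbf{p}-\mathbf{q}|=0$. Consequently each such $H_i$ is a singleton, and the whole ball $B(\mathbf{p}_0,\delta/2)$ is covered by at most $m$ isolated points---a contradiction, since a ball of positive radius contains infinitely many distinct points.

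The main obstacle I anticipate is the constructive status of this final contradiction step: in Bishop-style mathematics one cannot in general extract an explicit witness $\mathbf{p}\in U$ with $|\varphi(\mathbf{p})-\mathbf{p}|>0$ from the double negation $\neg\forall\mathbf{p}\in U\bigl(\varphi(\mathbf{p})=\mathbf{p}\bigr)$. I expect to address this by front-loading the construction: at a sufficiently small $\varepsilon$ I would positively exhibit two distinct points $\mathbf{p},\mathbf{q}$ lying in a single cover piece $H_i\subseteq U$, using total boundedness of the $H_i$'s together with the fact that $B(\mathbf{p}_0,\delta/2)$ has diameter strictly larger than $\varepsilon$, and then apply the constant-sequence step directly to conclude that at least one of $|\varphi(\mathbf{p})-\mathbf{p}|$, $|\varphi(\mathbf{q})-\mathbf{q}|$ is strictly positive, which yields the required witness.
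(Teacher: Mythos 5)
Your overall strategy---locate two distinct points $\mathbf{p},\mathbf{q}$ in a single piece $H_i$ of the cover lying inside the given open set, then use the sequential hypothesis to force one of them to move under $\varphi$---has the right shape, and it is essentially the shape of the paper's argument. But the step you flag as the ``main obstacle'' is where the whole proof lives, and your proposed repair does not close it. Feeding the constant sequences $\mathbf{p}_n\equiv\mathbf{p}$, $\mathbf{q}_n\equiv\mathbf{q}$ into the hypothesis only lets you contrapose: since $|\mathbf{p}_n-\mathbf{q}_n|=|\mathbf{p}-\mathbf{q}|>0$ does not tend to $0$, you learn $\neg\bigl(|\varphi(\mathbf{p})-\mathbf{p}|=0\wedge|\varphi(\mathbf{q})-\mathbf{q}|=0\bigr)$. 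In Bishop-style mathematics this negative statement does not yield the positive disjunction $|\varphi(\mathbf{p})-\mathbf{p}|>0$ or $|\varphi(\mathbf{q})-\mathbf{q}|>0$: for a real $x\geq 0$, $\neg(x=0)$ is strictly weaker than $x>0$. So ``apply the constant-sequence step directly to conclude that at least one of the two quantities is strictly positive'' is precisely the inference that is not available, and your earlier contradiction argument (finitely many singletons covering a ball) suffers the same defect, as you yourself observe.

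The paper supplies the missing device, borrowed from the treatment of ``sequentially at most one maximum'' in \cite{berg}: fix a sequence $(\mathbf{r}_n)_{n\geq 1}$ in $H_i$ with $|\varphi(\mathbf{r}_n)-\mathbf{r}_n|\to 0$ and construct an increasing binary sequence $(\lambda_n)_{n\geq 1}$ with $\lambda_n=0\Rightarrow\max(|\varphi(\mathbf{p})-\mathbf{p}|,|\varphi(\mathbf{q})-\mathbf{q}|)<2^{-n}$ and $\lambda_n=1\Rightarrow\max(|\varphi(\mathbf{p})-\mathbf{p}|,|\varphi(\mathbf{q})-\mathbf{q}|)>2^{-n-1}$; such a sequence exists by the constructive approximate-splitting property of the reals. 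Setting $(\mathbf{p}_n,\mathbf{q}_n)=(\mathbf{p},\mathbf{q})$ while $\lambda_n=0$ and $(\mathbf{p}_n,\mathbf{q}_n)=(\mathbf{r}_n,\mathbf{r}_n)$ once $\lambda_n=1$ produces sequences whose fixed-point defects genuinely converge to $0$, so the hypothesis gives $|\mathbf{p}_n-\mathbf{q}_n|\to 0$; computing $N$ with $|\mathbf{p}_N-\mathbf{q}_N|<|\mathbf{p}-\mathbf{q}|$ forces $\lambda_N=1$, and then $\max(|\varphi(\mathbf{p})-\mathbf{p}|,|\varphi(\mathbf{q})-\mathbf{q}|)>2^{-N-1}$ is the positive conclusion $\varphi(\mathbf{p})\neq\mathbf{p}$ or $\varphi(\mathbf{q})\neq\mathbf{q}$. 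This interleaving with an auxiliary minimizing sequence is the idea your proposal is missing; without it the argument stops at a double negation.
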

The essence of this proof is due to the proof of Proposition 1 of \cite{berg}.
\begin{proof}
Let $H_i$ be a set as defined above. Construct a sequence $(\mathbf{r}_n)_{n\geq 1}$ in $H_i$ such that $|\varphi(\mathbf{r}_n)-\mathbf{r}_n|\longrightarrow 0$. Consider $\mathbf{p}$, $\mathbf{q}$ in $H_i$ with $\mathbf{p}\neq \mathbf{q}$. Construct an increasing binary sequence $(\lambda_n)_{n\geq 1}$ such that
\[\lambda_n=0\Rightarrow \max(|\varphi(\mathbf{p})-\mathbf{p}|, |\varphi(\mathbf{q})-\mathbf{q}|)<2^{-n},\]
\[\lambda_n=1\Rightarrow \max(|\varphi(\mathbf{p})-\mathbf{p}|, |\varphi(\mathbf{q})-\mathbf{q}|)>2^{-n-1}.\]
We may assume that $\lambda_1=0$. If $\lambda_n=0$, set $\mathbf{p}_n=\mathbf{p}$ and $\mathbf{q}_n=\mathbf{q}$. If $\lambda_n=1$, set $\mathbf{p}_n=\mathbf{q}_n=\mathbf{r}_n$. Now the sequences $(|\varphi(\mathbf{p}_n)-\mathbf{p}_n|)_{n\geq 1}$, $(|\varphi(\mathbf{q}_n)-\mathbf{q}_n|)_{n\geq 1}$ converge to 0, and so $|\mathbf{p}_n-\mathbf{q}_n|\longrightarrow 0$. Computing $N$ such that $|\mathbf{p}_N-\mathbf{q}_N|<|\mathbf{p}-\mathbf{q}|$, we see that $\lambda_N=1$. Therefore, $\varphi(\mathbf{p})\neq \mathbf{p}$ or $\varphi(\mathbf{q})\neq \mathbf{q}$.
\end{proof}
The converse of Lemma \ref{seq} does not hold because the sequential local non-constancy implies isolatedness of points $\mathbf{r}$ satisfying $\varphi(\mathbf{r})=\mathbf{r}$ but the local non-constancy and the modified local non-constancy do not.

\begin{lem}
Let $\varphi$ be a uniformly continuous function from $\Delta$ to itself. Assume $\inf_{\mathbf{p}\in H_i}\varphi(\mathbf{p})=0$ for $H_i\subset \Delta$ defined above. If the following property holds:
\begin{quote}
For each $\delta>0$ there exists $\varepsilon>0$ such that if $\mathbf{p}, \mathbf{q}\in H_i$, $|\varphi(\mathbf{p})-\mathbf{p}|<\varepsilon$ and $|\varphi(\mathbf{q})-\mathbf{q}|<\varepsilon$, then $|\mathbf{p}-\mathbf{q}|\leq \delta$.
\end{quote}
Then, there exists a point $\mathbf{r}\in H_i$ such that $\varphi(\mathbf{r})=\mathbf{r}$. \label{fix0}
\end{lem}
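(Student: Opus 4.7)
The plan is to construct a sequence in $H_i$ along which $|\varphi(\mathbf{p}) - \mathbf{p}|$ tends to $0$, show that the stated property forces this sequence to be Cauchy, and then recover a fixed point as its limit using completeness of $H_i$ and the uniform continuity of $\varphi$.

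First I would use the assumption $\inf_{\mathbf{p}\in H_i}|\varphi(\mathbf{p})-\mathbf{p}|=0$ together with the constructive characterization of the infimum to choose, for each positive integer $n$, a point $\mathbf{p}_n\in H_i$ with $|\varphi(\mathbf{p}_n)-\mathbf{p}_n|<1/n$. Next, I would verify that $(\mathbf{p}_n)_{n\geq 1}$ is a Cauchy sequence: given $\delta>0$, apply the hypothesis of the lemma to obtain a corresponding $\varepsilon>0$, then pick $N$ large enough that $1/N<\varepsilon$. For any $n,m\geq N$ both $\mathbf{p}_n$ and $\mathbf{p}_m$ lie in $H_i$ and satisfy $|\varphi(\mathbf{p}_n)-\mathbf{p}_n|<\varepsilon$ and $|\varphi(\mathbf{p}_m)-\mathbf{p}_m|<\varepsilon$, so by the assumed property $|\mathbf{p}_n-\mathbf{p}_m|\leq \delta$.

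Since $H_i$ is a totally bounded subset of the compact (hence complete) simplex $\Delta$ and appears in the sense of Lemma \ref{closed} (so it may be taken to be complete, being closed in $\Delta$), the Cauchy sequence $(\mathbf{p}_n)$ converges to some $\mathbf{r}\in H_i$. By the uniform continuity of $\varphi$, $\varphi(\mathbf{p}_n)\to \varphi(\mathbf{r})$, and combined with $|\varphi(\mathbf{p}_n)-\mathbf{p}_n|\to 0$ this yields $|\varphi(\mathbf{r})-\mathbf{r}|=0$, i.e.\ $\varphi(\mathbf{r})=\mathbf{r}$.

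The only genuinely delicate step is the Cauchy argument, which is the constructive heart of the proof: one must see that the \emph{uniform} quantifier structure in the hypothesis (for each $\delta$ a single $\varepsilon$ that works for all $\mathbf{p},\mathbf{q}\in H_i$) is exactly what is needed to upgrade the approximate minimizing sequence to a Cauchy sequence without invoking any non-constructive selection principle. Everything else is routine: extracting a minimizing sequence from an infimum, passing to the limit by completeness of $H_i$, and transferring the limit through $\varphi$ by uniform continuity.
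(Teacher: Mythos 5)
Your proof is correct and follows essentially the same route as the paper: extract a minimizing sequence from $\inf_{\mathbf{p}\in H_i}|\varphi(\mathbf{p})-\mathbf{p}|=0$, use the uniform $\delta$--$\varepsilon$ hypothesis to show it is Cauchy, and pass to the limit in $H_i$ using (uniform) continuity of $\varphi$. Your added remark that $H_i$ must be taken complete (closed in $\Delta$) for the limit to lie in $H_i$ is a point the paper glosses over, but otherwise the two arguments coincide.
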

\begin{proof}
Choose a sequence $(\mathbf{p}_n)_{n\geq 1}$ in $H_i$ such that $|\varphi(\mathbf{p}_n)-\mathbf{p}_n|\longrightarrow 0$. Compute $N$ such that $|\varphi(\mathbf{p}_n)-\mathbf{p}_n|<\varepsilon$ for all $n\geq N$. Then, for $m, n\geq N$ we have $|\mathbf{p}_m-\mathbf{p}_n|\leq \delta$. Since $\delta>0$ is arbitrary, $(\mathbf{p}_n)_{n\geq 1}$ is a Cauchy sequence in $H_i$, and converges to a limit $\mathbf{r}\in H_i$. The continuity of $\varphi$ yields $|\varphi(\mathbf{r})-\mathbf{r}|=0$, that is, $\varphi(\mathbf{r})=\mathbf{r}$.
\end{proof}

Now we look at the problem of the existence of a Nash equilibrium in a finite strategic game according to \cite{nash}.  A Nash equilibrium of a finite strategic game is a state where all players choose their best responses to strategies of other players.

Consider an $n$-players strategic game with $m$ pure strategies for each player. $n$ and $m$ are finite natural numbers not smaller than 2. Let $S_i$ be the set of pure strategies of player $i$, and denote his each pure strategy by $s_{ij}$. His mixed strategy is defined as a probability distribution over $S_i$, and is denoted by $\mathbf{p}_i$. Let $p_{ij}$ be a probability that player $i$ chooses $s_{ij}$, then we must have $\sum^m_{j=1}p_{ij}=1$ for all $i$. A combination of mixed strategies of all players is called a \emph{profile}. It is denoted by $\mathbf{p}$. Let $\pi_i(\mathbf{p})$ be the expected payoff of player $i$ at profile $\mathbf{p}$, and $\pi_i(s_{ij},\mathbf{p}_{-i})$ be his payoff when he chooses a strategy $s_{ij}$ at that profile, where $\mathbf{p}_{-i}$ denotes a combination of mixed strategies of players other than $i$ at profile $\mathbf{p}$. $\pi_i(\mathbf{p})$ is written as follows;
\[\pi_i(\mathbf{p})=\pi_i(\mathbf{p}_i, \mathbf{p}_{-i})=\sum_{\{j: p_{ij}>0\}}p_{ij}\pi_i(s_{ij}, \mathbf{p}_{-i})\]
Assume that the values of payoffs of all players are finite. Then, since pure strategies are finite, and expected payoffs are linear functions about probability distributions over the sets of pure strategies of all players, $\pi_i(\mathbf{p})$ is uniformly continuous about $\mathbf{p}$.

For each $i$ and $j$ let
\[v_{ij}=p_{ij}+\max(\pi_i(s_{ij}, \mathbf{p}_{-i})-\pi_i(\mathbf{p}),0),\]
and define the following function.
\begin{equation}
\psi_{ij}(\mathbf{p})=\frac{v_{ij}}{v_{i1}+v_{i2}+\dots +v_{im}},\label{psi1}
\end{equation}
where $\sum_{j=1}^m\psi_{ij}=1$ for all $i$. Let $\psi_i(\mathbf{p})=(\psi_{i1}, \psi_{i2}, \dots, \psi_{im})$, $\psi(\mathbf{p})=(\psi_1, \psi_2, \cdots, \psi_n)$. Since each $\psi_i$ is an $m$-dimensional vector such that the values of its components are between 0 and 1, and the sum of its components is 1, it represents a point on an $m-1$-dimensional simplex. $\psi(\mathbf{p})$ is a combination of vectors $\psi_i$'s. It is a vector such that its components are components of $\psi_i(\mathbf{p})$ for all players. Thus, it is a vector with $n\times m$ components, but since the number of independent components is $n(m-1)$, the range of $\psi$ is the $n$-times product of $m-1$-dimensional simplices. It is convex, and homeomorphic to an $n(m-1)$-dimensional simplex. $\mathbf{p}=(\mathbf{p}_1, \mathbf{p}_2, \dots, \mathbf{p}_n)$ is also a vector with $n\times m$ components, and the number of its independent components is $n(m-1)$. Thus, the domain of $\psi$ is the same set as the range of $\psi$, and a uniformly continuous function from the domain of $\psi$ to its range corresponds one to one to a uniformly continuous function from an $n(m-1)$-dimensional simplex to itself.

\begin{figure}[tpb]
\begin{center}
\includegraphics[height=7.5cm]{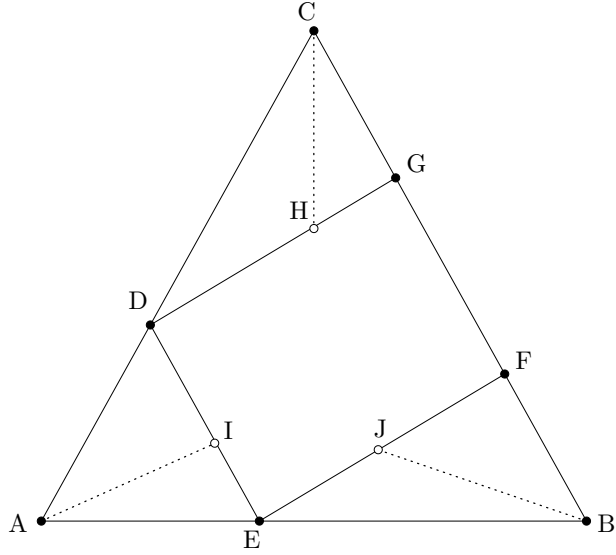}

\end{center}
	\vspace*{-.3cm}
	\caption{Homeomorphism between simplex and combination of strategies}
	\label{tria15}
\end{figure}
%}

Let us consider a homeomorphism between an $n(m-1)$-dimensional simplex and the space of players' mixed strategies which is denoted by $\mathbf{P}$. Figure \ref{tria15} depicts an example of a case of two players with two pure strategies for each player. Vertices $D$, $E$, $F$ and $G$ represent states where two players choose pure strategies, and points on edges $DE$, $EF$, $FG$ and $GD$ represent states where one player chooses a pure strategy. In such a homeomorphism vertices of the simplex do not correspond to any vertex of $\mathbf{P}$. Vertices of the simplex and points on faces (simplices whose dimension is lower than $n(m-1)$) of the simplex correspond to the points on faces of $\mathbf{P}$. For example, in Figure \ref{tria15} $A$, $B$ and $C$ correspond, respectively, to $I$, $J$ and $H$. We make these points satisfy the condition of the following sequential local non-constancy. On the other hand, each vertex of $\mathbf{P}$, $D$, $E$, $F$ and $G$ corresponds, respectively, to itself on a face of the simplex which contains it. 
%For example, if $\mathbf{p}$ is $K$ in Figure \ref{tria15}, $\mathbf{p'}$ is a point in the face of the simplex which contains $K$, and it is in the near distance of $K$.
Next we assume the following conditions.
\begin{defn}(\textit{Sequential local non-constancy of payoff functions})
\begin{enumerate}
\item At points in $\mathbf{P}$ corresponding to vertices of $\Delta$ $\pi_i(s_{ij}, \mathbf{p}_{-i})-\pi_i(\mathbf{p})>0$ for some $i$ and $j$.

\item Let $\partial \mathbf{P}$ be a face of $\mathbf{P}$ in which some players choose pure strategies. There exists $\bar{\varepsilon}$ with the following property. For each $\varepsilon>0$ less than $\bar{\varepsilon}$ there exist totally bounded sets $H_1, H_2, \dots, H_m$, each of diameter less than or equal to $\varepsilon$, such that $\partial \mathbf{P}=\cup_{i=1}^m H_i$, and if for all sequences $(\mathbf{p}_n)_{n\geq 1}$, $(\mathbf{q}_n)_{n\geq 1}$ in each $H_i$, $\max(\pi_i(s_{ij}, (\mathbf{p}_n)_{-i})-\pi_i(\mathbf{p}_n), 0)\longrightarrow 0$, $\max(\pi_i(s_{ij}, (\mathbf{q}_n)_{-i})-\pi_i(\mathbf{q}_n), 0)\longrightarrow 0$ for all $s_{ij}\in S_i$ for all $i$, then $|\mathbf{p}_n-\mathbf{q}_n|\longrightarrow 0$.

\item There exists $\bar{\varepsilon}$ with the following property. For each $\varepsilon>0$ less than $\bar{\varepsilon}$ there exist totally bounded sets $H_1, H_2, \dots, H_m$, each of diameter less than or equal to $\varepsilon$, such that $\mathbf{P}=\cup_{i=1}^m H_i$, and if for all sequences $(\mathbf{p}_n)_{n\geq 1}$, $(\mathbf{q}_n)_{n\geq 1}$ in each $H_i$, $\max(\pi_i(s_{ij}, (\mathbf{p}_n)_{-i})-\pi_i(\mathbf{p}_n), 0)\longrightarrow 0$, $\max(\pi_i(s_{ij}, (\mathbf{q}_n)_{-i})-\pi_i(\mathbf{q}_n), 0)\longrightarrow 0$ for all $s_{ij}\in S_i$ for all $i$, then $|\mathbf{p}_n-\mathbf{q}_n|\longrightarrow 0$.
\end{enumerate}
 \label{sln2}
\end{defn}

By the sequential local non-constancy of payoff functions we obtain the following results.
\begin{enumerate}
	\item At points in $\mathbf{P}$ corresponding to vertices of $\Delta$, $\sum_{j, p_{ij}>0}p_{ij}\pi_i(s_{ij},\mathbf{p}_{-i})=\pi_i(\mathbf{p})$ for each $i$ implies that $\psi_{ij}(\mathbf{p})\neq p_{ij}$ for some $i$ and $j$.

	\item By (2) of the definition of sequential local non-constancy of payoff functions, for each $\varepsilon>0$ less than $\bar{\varepsilon}$ there exist totally bounded sets $H_1, H_2, \dots, H_m$, each of diameter less than or equal to $\varepsilon$, such that $\partial \mathbf{P}=\cup_{i=1}^m H_i$, and if for all sequences $(\mathbf{p}_n)_{n\geq 1}$, $(\mathbf{q}_n)_{n\geq 1}$ in each $H_i$, $|\psi(\mathbf{p}_n)-\mathbf{p}_n|\longrightarrow 0$ and $|\psi(\mathbf{q}_n)-\mathbf{q}_n|\longrightarrow 0$, then $|\mathbf{p}_n-\mathbf{q}_n|\longrightarrow 0$.

	\item By (3) of the definition of sequential local non-constancy of payoff functions, For each $\varepsilon>0$ less than $\bar{\varepsilon}$ there exist totally bounded sets $H_1, H_2, \dots, H_m$, each of diameter less than or equal to $\varepsilon$, such that $\mathbf{P}=\cup_{i=1}^m H_i$, and if for all sequences $(\mathbf{p}_n)_{n\geq 1}$, $(\mathbf{q}_n)_{n\geq 1}$ in each $H_i$, $|\psi(\mathbf{p}_n)-\mathbf{p}_n|\longrightarrow 0$ and $|\psi(\mathbf{q}_n)-\mathbf{q}_n|\longrightarrow 0$, then $|\mathbf{p}_n-\mathbf{q}_n|\longrightarrow 0$.
\end{enumerate}

Let replace $n$ by $n(m-1)$. We show the following theorem.
\begin{thm}
In any finite strategic game with sequentially locally non-constant payoff functions there exists a Nash equilibrium.
\end{thm}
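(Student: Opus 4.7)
The plan is to transfer the problem to the $n(m-1)$-dimensional simplex $\Delta$ via the homeomorphism with $\mathbf{P}$ described just before Definition \ref{sln2}, so that the function $\psi$ defined in (\ref{psi1}) becomes a uniformly continuous self-map of $\Delta$. A fixed point $\mathbf{r}$ of $\psi$ automatically satisfies $\max(\pi_i(s_{ij},\mathbf{r}_{-i})-\pi_i(\mathbf{r}),0)=0$ for every $i$ and $j$ by the standard Nash argument (clear the denominators in (\ref{psi1}) and use $\sum_j r_{ij}\pi_i(s_{ij},\mathbf{r}_{-i})=\pi_i(\mathbf{r})$), and is thus a Nash equilibrium. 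It therefore suffices to produce such an $\mathbf{r}$.

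To get the fixed point I would apply Sperner's lemma to $\psi$. For each small $\varepsilon>0$ I build the modified partition of $\Delta$ from Section \ref{sec2}. By Lemma \ref{seq} together with the consequences of sequential local non-constancy of payoff functions listed right after Definition \ref{sln2}, the function $\psi$ enjoys modified local non-constancy, so each interior or boundary vertex of the partition can be placed inside its small open neighborhood at a point with $\psi(\mathbf{p})\neq \mathbf{p}$; at the vertices of $\Delta$ themselves this holds already by (1) of Definition \ref{sln2}. I then label each vertex $\mathbf{p}$ with some $k\in\{0,1,\dots,n(m-1)\}$ satisfying $\psi_k(\mathbf{p})\leq \mathbf{p}_k$ with $\mathbf{p}_k>0$, further constrained on the boundary to lie among the labels of the vertices of the smallest face of $\Delta$ that contains $\mathbf{p}$; existence of such a $k$ follows from $\sum_k(\mathbf{p}_k-\psi_k(\mathbf{p}))=0$ combined with the perturbation just made. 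Lemma \ref{l2} then supplies a fully labeled small simplex $\sigma_\varepsilon$.

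Letting $\varepsilon=1/n\to 0$ I obtain vertices $\mathbf{p}^{0,n},\mathbf{p}^{1,n},\dots,\mathbf{p}^{n(m-1),n}$ of $\sigma_{1/n}$ whose mutual distances tend to zero and which satisfy $\psi_k(\mathbf{p}^{k,n})\leq \mathbf{p}^{k,n}_k$ coordinatewise. Uniform continuity of $\psi$ together with $\sum_k\psi_k=\sum_k\mathbf{p}_k=1$ then forces $|\psi(\mathbf{p}^{0,n})-\mathbf{p}^{0,n}|\longrightarrow 0$. To convert this almost-fixed-point sequence into a genuine fixed point I cover $\Delta$ by the totally bounded sets $H_1,\dots,H_M$ of diameter at most a chosen $\varepsilon<\bar{\varepsilon}$ provided by item (3) of the consequences listed after Definition \ref{sln2}; for $n$ large enough the simplex $\sigma_{1/n}$ is contained in a single $H_i$, so a subsequence of $(\mathbf{p}^{0,n})$ is concentrated in one such $H_i$. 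Verifying the uniform hypothesis of Lemma \ref{fix0} for $\psi$ on $H_i$ (derivable from the sequential condition by a short contradiction in the spirit of the proof of Lemma \ref{seq}), I obtain $\mathbf{r}\in H_i$ with $\psi(\mathbf{r})=\mathbf{r}$, which is the Nash equilibrium sought.

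The step I expect to be the main obstacle is this last extraction: classically one simply takes a convergent subsequence of the almost-fixed points and appeals to continuity, but constructively Cauchyness must be established quantitatively. This is exactly what the sequential (as opposed to merely modified) local non-constancy hypothesis is tailored to give, and it is the reason Lemma \ref{fix0} --- not a bare compactness argument --- must be brought to bear.
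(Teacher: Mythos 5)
Your proposal follows essentially the same route as the paper's own proof: label the modified partition so that Sperner's lemma (Lemma \ref{l2}) yields fully labeled simplices, extract from them a sequence of approximate fixed points of $\psi$ with $\inf|\psi(\mathbf{p})-\mathbf{p}|=0$ on some $H_i$, upgrade this to an exact fixed point via sequential local non-constancy and Lemma \ref{fix0}, and conclude with the standard Nash argument that a fixed point of $\psi$ is an equilibrium. The details you leave implicit (the boundary labeling and the binary-sequence verification of the hypothesis of Lemma \ref{fix0}) are exactly the ones the paper fills in, so the proposal is correct and matches the paper's argument.
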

\begin{proof}
Let us prove this theorem through some steps.
\begin{enumerate}
\item First we show that we can partition an $n(m-1)$-dimensional simplex $\Delta$ so that the conditions for Sperner's lemma (for modified partition of a simplex) are satisfied. We partition $\Delta$ according to the method in the proof of Sperner's lemma, and label the vertices of simplices constructed by partition of $\Delta$. It is important how to label the vertices contained in the faces of $\Delta$. Let $K$ be the set of small simplices constructed by partition of $\Delta$, $\mathbf{p}=(\mathbf{p}_0, \mathbf{p}_1, \dots, \mathbf{p}_{n(m-1)})$ be a vertex of a simplex of $K$, and denote the $i$-th coordinate of $\psi(\mathbf{p})$ by $\psi_i$ or $\psi_i(\mathbf{p})$. We label a vertex $\mathbf{p}$ according to the following rule,
\[\mathrm{If}\ \mathbf{p}_k>\psi_k,\ \mathrm{we\ label}\ \mathbf{p}\ \mathrm{with}\ k.\]
If there are multiple $k$'s which satisfy this condition, we label $\mathbf{p}$ conveniently for the conditions for Sperner's lemma to be satisfied.% We do not randomly label the vertices.

We consider labeling for vertices about three cases.
\begin{enumerate}
	\item Vertices of $\Delta$:

One of the coordinates of a vertex $\mathbf{p}$ of $\Delta$ is 1, and all other coordinates are zero. Consider a vertex $(1, 0, \dots, 0)$. $\psi(\mathbf{p})\neq \mathbf{p}$ means $\psi_j>\mathbf{p}_j$ or $\psi_j<\mathbf{p}_j$ for at least one $j$. $\psi_i<\mathbf{p}_i$ can not hold for $i\neq 0$. On the other hand, $\psi_0>\mathbf{p}_0$ can not hold. When $\psi_0(\mathbf{p})<\mathbf{p}_0$, we label $\mathbf{p}$ with 0. Assume that $\psi_i(\mathbf{p})>\mathbf{p}_i=0$ for some $i\neq 0$. Then, since $\sum_{j=0}^{n(m-1)}\psi_j(\mathbf{p})=1=\mathbf{p}_0$, we have $\psi_0(\mathbf{p})<\mathbf{p}_0$. Therefore, $\mathbf{p}$ is labeled with 0. Similarly a vertex $\mathbf{p}$ whose $k$-th coordinate is 1 is labeled with $k$ for all $k\in \{0, 1, \dots, n(m-1)\}$.

	\item Vertices on the faces of $\Delta$:

Let $\mathbf{p}$ be a vertex of a simplex contained in an $n(m-1)-1$-dimensional face of $\Delta$ such that $\mathbf{p}_i=0$ for one $i$ among $0, 1, 2, \dots, n(m-1)$ (its $i$-th coordinate is 0). $\psi(\mathbf{p})\neq \mathbf{p}$ means that $\psi_j>\mathbf{p}_j$ or $\psi_j<\mathbf{p}_j$ for at least one $j$. $\psi_i<\mathbf{p}_i=0$ can not hold. When $\psi_k<\mathbf{p}_k$ for some $k\neq i$, we label $\mathbf{p}$ with $k$. Assume $\psi_i>\mathbf{p}_i=0$. Then, since $\sum_{j=0}^{n(m-1)}\mathbf{p}_j=\sum_{j=0}^{n(m-1)}\psi_j=1$, we have $\psi_k<\mathbf{p}_k$ for some $k\neq i$, and we label $\mathbf{p}$ with $k$. Assume that $\psi_j>\mathbf{p}_j$ for some $j\neq i$. Then, we have
\[\sum_{l=0, l\neq j}^{n(m-1)} \psi_l<\sum_{l=0, l\neq j}^{n(m-1)}\mathbf{p}_l,\]
and so $\psi_k<\mathbf{p}_k$ for some $k\neq i, j$. Thus, we label $\mathbf{p}$ with $k$.

We have proved that we can label each vertex of a simplex contained in an $n(m-1)-1$-dimensional face of $\Delta$ such that $\mathbf{p}_i=0$ for one $i$ among $0, 1, 2, \dots, n(m-1)$ with the number other than $i$. By similar procedures we can show that we can label the vertices of a simplex contained in an $n(m-1)-2$-dimensional face of $\Delta$ such that $\mathbf{p}_i=0$ for two $i$'s among $0, 1, 2, \dots, n(m-1)$ with the number other than those $i$'s, and so on.

\begin{quote}
Consider the case where, for example, $\mathbf{p}_{i}=\mathbf{p}_{i+1}=0$. Neither $\psi_i<\mathbf{p}_i=0$ nor $\psi_i<\mathbf{p}_{i+1}=0$ can hold. When $\psi_k<\mathbf{p}_k$ for some $j\neq i, i+1$, we label $\mathbf{p}$ with $k$. Assume $\psi_i>\mathbf{p}_i=0$ or $\psi_{i+1}>\mathbf{p}_{i+1}=0$. Then, since $\sum_{j=0}^{n(m-1)}\mathbf{p}_j=\sum_{j=0}^{n(m-1)}\psi_j=1$, we have $\psi_k<\mathbf{p}_k$ for some $k\neq i,\ i+1$, and we label $\mathbf{p}$ with $k$. Assume that $\psi_j>\mathbf{p}_j$ for some $j\neq i, i+1$. Then, we have
\[\sum_{l=0, l\neq j}^{n(m-1)} \psi_l<\sum_{l=0, l\neq j}^{n(m-1)}\mathbf{p}_l,\]
and so $\psi_k<\mathbf{p}_k$ for some $k\neq i, i+1, j$. Thus, we label $\mathbf{p}$ with $k$.
\end{quote}

	\item Vertices of small simplices inside $\Delta$:

By the modified local non-constancy of $\psi$ every vertex $\mathbf{p}$ in a modified partition of a simplex can be selected to satisfy $\psi(\mathbf{p})\neq \mathbf{p}$. Assume that $\psi_i>\mathbf{p}_i$ for some $i$. Then, since $\sum_{j=0}^{n(m-1)} \mathbf{p}_j=\sum_{j=0}^{n(m-1)} \psi_j=1$, we have
\[\psi_k<\mathbf{p}_k\]
for some $k\neq i$, and we label $\mathbf{p}$ with $k$.
\end{enumerate}
Therefore, the conditions for Sperner's lemma (for modified partition of a simplex) are satisfied, and there exist an odd number of fully labeled simplices in $K$.

\item Suppose that we partition $\Delta$ sufficiently fine so that the distance between any pair of the vertices of simplices of $K$ is sufficiently small. Let $\delta^{n(m-1)}$ be a fully labeled $n(m-1)$-dimensional simplex of $K$, and $\mathbf{p}^0, \mathbf{p}^1, \dots$ and $\mathbf{p}^{n(m-1)}$ be the vertices of $\delta^{n(m-1)}$. We name these vertices so that $\mathbf{p}^0, \mathbf{p}^1, \dots, \mathbf{p}^{n(m-1)}$ are labeled, respectively, with 0, 1, $\dots$, $n(m-1)$. The values of $\psi$ at theses vertices are $\psi(\mathbf{p}^0), \psi(\mathbf{p}^1), \dots$ and $\psi(\mathbf{p}^{n(m-1)})$. The $j$-th coordinates of $\mathbf{p}^i$ and $\psi({\mathbf{p}^i}),\ i=0, 1, \dots, n(m-1)$, are, respectively, denoted by $\mathbf{p}^i_j$ and $\psi_j(\mathbf{p}^i)$. About $\mathbf{p}^0$, from the labeling rules we have $\mathbf{p}^0_0>\psi_0(\mathbf{p}^0)$. About $\mathbf{p}^1$, also from the labeling rules we have $\mathbf{p}^1_1>\psi_1(\mathbf{p}^1)$. Since $n$ and $m$ are finite, by the uniform continuity of $\psi$ there exists $\delta>0$ such that if $|\mathbf{p}^i-\mathbf{p}^j|<\delta$, then $|\psi(\mathbf{p}^i)-\psi(\mathbf{p}^j)|<\frac{\varepsilon}{2n(m-1)[n(m-1)+1]}$ for $\varepsilon>0$ and $i\neq j$. $|\psi(\mathbf{p}^0)-\psi(\mathbf{p}^1)|<\frac{\varepsilon}{2n(m-1)[n(m-1)+1]}$ means $\psi_1(\mathbf{p}^1)>\psi_1(\mathbf{p}^0)-\frac{\varepsilon}{2n(m-1)[n(m-1)+1]}$. On the other hand, $|\mathbf{p}^0-\mathbf{p}^1|<\delta$ means $\mathbf{p}^0_1>\mathbf{p}^1_1-\delta$. We can make $\delta$ satisfying $\delta<\frac{\varepsilon}{2n(m-1)[n(m-1)+1]}$. Thus, from
\begin{align*}
\mathbf{p}^0_1&>\mathbf{p}^1_1-\delta,\ \mathbf{p}^1_1>\psi_1(\mathbf{p}^1),\ \psi_1(\mathbf{p}^1)\\
&>\psi_1(\mathbf{p}^0)-\frac{\varepsilon}{2n(m-1)[n(m-1)+1]}
\end{align*}
we obtain
\begin{align*}
\mathbf{p}^0_1&>\psi_1(\mathbf{p}^0)-\delta-\frac{\varepsilon}{2n(m-1)[n(m-1)+1]}\\
&>\psi_1(\mathbf{p}^0)-\frac{\varepsilon}{n(m-1)[n(m-1)+1]}
\end{align*}
By similar arguments, for each $i$ other than 0,
\begin{equation}
\mathbf{p}^0_i>\psi_i(\mathbf{p}^0)-\frac{\varepsilon}{n(m-1)[n(m-1)+1]}. \label{e1}
\end{equation}
For $i=0$ we have
\begin{equation}
\mathbf{p}^0_0>\psi_0(\mathbf{p}^0) \label{e2}
\end{equation}
Adding (\ref{e1}) and (\ref{e2}) side by side except for some $i$ (denote it by $k$) other than 0,
\[\sum_{j=0, j\neq k}^{n(m-1)} \mathbf{p}^0_j>\sum_{j=0, j\neq k}^{n(m-1)} \psi_j(\mathbf{p}^0)-\frac{[n(m-1)-1]\varepsilon}{n(m-1)[n(m-1)+1]}.\]
From $\sum_{j=0}^{n(m-1)} \mathbf{p}^0_j=1$, $\sum_{j=0}^{n(m-1)} \psi_j(\mathbf{p}^0)=1$ we have $1-\mathbf{p}^0_k>1-\psi_k(\mathbf{p}^0)-\frac{[n(m-1)-1]\varepsilon}{n(m-1)[n(m-1)+1]}$, which is rewritten as
\[\mathbf{p}^0_k<\psi_k(\mathbf{p}^0)+\frac{[n(m-1)-1]\varepsilon}{n(m-1)[n(m-1)+1]}.\]
Since (\ref{e1}) implies $\mathbf{p}^0_k>\psi_k(\mathbf{p}^0)-\frac{\varepsilon}{n(m-1)[n(m-1)+1]}$, we have
\begin{align*}
\psi_k(\mathbf{p}^0)&-\frac{\varepsilon}{n(m-1)[n(m-1)+1]}<\mathbf{p}^0_k<\psi_k(\mathbf{p}^0)\\
&+\frac{[n(m-1)-1]\varepsilon}{n(m-1)[n(m-1)+1]}.
\end{align*}
Thus,
\begin{equation}
|\mathbf{p}^0_k-\psi_k(\mathbf{p}^0)|<\frac{[n(m-1)-1]\varepsilon}{n(m-1)[n(m-1)+1]} \label{e3}
\end{equation}
is derived. On the other hand, adding (\ref{e1}) from 1 to $n(m-1)$ yields
\begin{equation*}
\sum_{j=1}^{n(m-1)} \mathbf{p}^0_j>\sum_{j=1}^{n(m-1)} \psi_j(\mathbf{p}^0)-\frac{\varepsilon}{n(m-1)+1}.%\label{nash1}
\end{equation*}
From $\sum_{j=0}^{n(m-1)} \mathbf{p}^0_j=1$, $\sum_{j=0}^{n(m-1)} \psi_j(\mathbf{p}^0)=1$ we have
\begin{equation}
1-\mathbf{p}^0_0>1-\psi_0(\mathbf{p}^0)-\frac{\varepsilon}{n(m-1)+1}.\label{nash1}
\end{equation}
Then, from (\ref{e2}) and (\ref{nash1}) we get
\begin{equation}
|\mathbf{p}^0_0-\psi_0(\mathbf{p}^0)|<\frac{\varepsilon}{n(m-1)+1}. \label{e24}
\end{equation}
From (\ref{e3}) and (\ref{e24}) we obtain the following result,
\begin{equation*}
|\mathbf{p}^0_i-\psi_i(\mathbf{p}^0)|<\frac{\varepsilon}{n(m-1)+1}\ \mathrm{for\ all}\ i. %\label{fp}
\end{equation*}
Thus,
\begin{equation}
|\mathbf{p}^0-\psi(\mathbf{p}^0)|<\varepsilon.\label{fp}
\end{equation}
Since $\varepsilon$ is arbitrary, we have $\inf_{\mathbf{p}\in \Delta}|\psi(\mathbf{p})-\mathbf{p}|=0$.

\item Since as described in Definition \ref{sln} $\Delta=\sum_{i=1}^nH_i$, where each $H_i$ is a totally bounded set whose diameter is less than or equal to $\varepsilon$, for at least one $H_i$ we have $\inf_{\mathbf{p}\in H_i}|\psi(\mathbf{p})-\mathbf{p}|=0$. Choose a sequence $(\mathbf{r}_n)_{n\geq 1}$ such that $|\psi(\mathbf{r}_n)-\mathbf{r}_n|\longrightarrow 0$ in such $H_i$. In view of Lemma \ref{fix0} it is enough to prove that the following condition holds.
\begin{quote}
For each $\varepsilon>0$ there exists $\delta>0$ such that if $\mathbf{p}, \mathbf{q}\in H_i$, $|\psi(\mathbf{p})-\mathbf{p}|<\delta$ and $|\psi(\mathbf{q})-\mathbf{q}|<\delta$, then $|\mathbf{p}-\mathbf{q}|\leq \varepsilon$.
\end{quote}
Assume that the set
\[T=\{(\mathbf{p},\mathbf{q})\in H_i\times H_i:\ |\mathbf{p}-\mathbf{q}|\geq \varepsilon\}\]
is nonempty and compact\footnote{See Theorem 2.2.13 of \cite{bv}.}. Since the mapping $(\mathbf{p},\mathbf{q})\longrightarrow \max(|\psi(\mathbf{p})-\mathbf{p}|,|\psi(\mathbf{q})-\mathbf{q}|)$ is uniformly continuous, we can construct an increasing binary sequence $(\lambda_n)_{n\geq 1}$ such that
\[\lambda_n=0\Rightarrow \inf_{(\mathbf{p},\mathbf{q})\in T}\max(|\psi(\mathbf{p})-\mathbf{p}|,|\psi(\mathbf{q})-\mathbf{q}|)<2^{-m},\]
\[\lambda_n=1\Rightarrow \inf_{(\mathbf{p},\mathbf{q})\in T}\max(|\psi(\mathbf{p})-\mathbf{p}|,|\psi(\mathbf{q})-\mathbf{q}|)>2^{-m-1}.\]
It suffices to find $n$ such that $\lambda_n=1$. In that case, if $|\psi(\mathbf{p})-\mathbf{p}|<2^{-m-1}$, $|\psi(\mathbf{q})-\mathbf{q}|<2^{-m-1}$, we have $(\mathbf{p},\mathbf{q})\notin T$ and $|\mathbf{p}-\mathbf{q}|\leq \varepsilon$. Assume $\lambda_1=0$. If $\lambda_n=0$, choose $(\mathbf{p}_n, \mathbf{q}_n)\in T$ such that $\max(|\psi(\mathbf{p}_n)-\mathbf{p}_n|, |\psi(\mathbf{q}_n)-\mathbf{q}_n|)<2^{-m}$, and if $\lambda_n=1$, set $\mathbf{p}_n=\mathbf{q}_n=\mathbf{r}_n$. Then, $|\psi(\mathbf{p}_n)-\mathbf{p}_n|\longrightarrow 0$ and $|\psi(\mathbf{q}_n)-\mathbf{q}_n|\longrightarrow 0$, so $|\mathbf{p}_n-\mathbf{q}_n|\longrightarrow 0$. Computing $N$ such that $|\mathbf{p}_N-\mathbf{q}_N|<\varepsilon$, we must have $\lambda_N=1$. We have completed the proof of the existence of a point which satisfies $\psi(\mathbf{p})=\mathbf{p}$.

\item Denote one of the points which satisfy $\psi(\mathbf{p})=\mathbf{p}$ by $\mathbf{\tilde{p}}=(\tilde{p}_1, \tilde{p}_2, \dots, \tilde{p}_n)$ and the components of $\tilde{p}_i$ by $\tilde{p}_{ij}$. Then, we have
\[\psi_{ij}=\tilde{p}_{ij},\ \mathrm{for\ all}\ i\ \mathrm{and}\ j.\]
By the definition of $\psi_{ij}$

\begin{align*}
\frac{\tilde{p}_{ij}+\max(\pi_i(s_{ij}, \mathbf{\tilde{p}}_{-i})-\pi_{i}(\mathbf{\tilde{p}}),0)}{1+\sum_{k=1}^m\max(\pi_i(s_{ik}, \mathbf{\tilde{p}}_{-i})-\pi_i(\mathbf{\tilde{p}}),0)}=\tilde{p}_{ij}.
\end{align*}
Let $\lambda=\sum_{k=1}^m\max(\pi_i(s_{ik}, \mathbf{\tilde{p}}_{-i})-\pi_i(\mathbf{\tilde{p}}),0)$, then
\[\max(\pi_i(s_{ij}, \mathbf{\tilde{p}}_{-i})-\pi_i(\mathbf{\tilde{p}}),0)=\lambda \tilde{p}_{ij},\]
where $\mathbf{\tilde{p}}_{-i}$ denotes a combination of mixed strategies of players other than $i$ at profile $\mathbf{\tilde{p}}$.

Since $\pi_i(\mathbf{\tilde{p}})=\sum_{\{j: \tilde{p}_{ij}>0\}}\tilde{p}_{ij}\pi_i(s_{ij}, \mathbf{\tilde{p}}_{i})$, it is impossible that $\max(\pi_i(s_{ij}, \mathbf{\tilde{p}}_{-i})-\pi_i(\mathbf{\tilde{p}}),0)=\pi_i(s_{ij}, \mathbf{\tilde{p}}_{-i})-\pi_i(\mathbf{\tilde{p}})>0$ for all $j$ satisfying $\tilde{p}_{ij}>0$. Thus, $\lambda=0$, and $\max(\pi_i(s_{ij}, \mathbf{\tilde{p}}_{-i})-\pi_i(\mathbf{\tilde{p}}),0)=0$ holds for all $s_{ij}$'s whether $\tilde{p}_{ij}>0$ or not, and it holds for all players. Then, strategies of all players in $\mathbf{\tilde{p}}$ are best responses each other, and a state where all players choose these strategies is a Nash equilibrium.

\end{enumerate}
\end{proof}

\begin{table}[htpb]
\begin{center}
\begin{tabular}{c|c|c|c|}
\multicolumn{4}{c}{\ \ \ \ \ \ \ \ \ \ \ \ \ \ \ \ Player 2} \\ \cline{2-4}
& &X&Y\\ \cline{2-4}
Player&X&2, 2 &0, 3\\ \cline{2-4}
1&Y& 3, 0& 1, 1\\ \cline{2-4}
\end{tabular}
\vspace{.3cm}
\caption{Example of game 1}
\label{ga-1}
\end{center}
\end{table}

Consider two examples. See a game in Table \ref{ga-1}. It is an example of the so-called Prisoners' Dilemma. Pure strategies of Player 1 and 2 are $X$ and $Y$. The left side number in each cell represents the payoff of Player 1 and the right side number represents the payoff of Player 2. Let $p_X$ and $1-p_X$ denote the probabilities that Player 1 chooses, respectively, $X$ and $Y$, and $q_X$ and $1-q_X$ denote the probabilities for Player 2. Denote the expected payoffs of Player 1 and 2 by $\pi_1(p_X, q_X)$ and $\pi_2(p_X, q_X)$. Then,
\begin{align*}
\pi_1(p_X, q_X)&=2p_Xq_X+3(1-p_X)q_X+(1-p_X)(1-q_X)\\
&=1-p_X+2q_X,
\end{align*}
and
\begin{align*}
\pi_2(p_X, q_X)&=2p_Xq_X+3p_X(1-q_X)+(1-p_X)(1-q_X)\\
&=1-q_X+2p_X.
\end{align*}
Denote the payoff of Player 1 when he chooses $X$ by $\pi_1(X, q_X)$, and that when he chooses $Y$ by $\pi_1(Y, q_X)$. Similarly for Player B. Then,
\[\pi_1(Y, q_X)=1+2q_X>\pi_1(p_X, q_X)\ \mathrm{for\ any}\ q_X\ \mathrm{and}\ p_X>0,\]
\[\pi_2(p_X,Y)=1+2p_X>\pi_2(p_X, q_X)\ \mathrm{for\ any}\ p_X\ \mathrm{and}\ q_X>0.\]

We can select points in $\mathbf{P}$ in (1) of Definition \ref{sln2} which correspond to vertices of $\Delta$ so that $\pi_1(Y, q_X)>\pi_1(p_X, q_X)$ and $\pi_2(p_X, Y)>\pi_2(p_X, q_X)$ hold.

Consider two sequences of $p_X$, $(p_X(m))_{m\geq 1}$ and $(p'_X(m))_{m\geq 1}$ such that $p_X(m)>0$ and $p'_X(m)>0$. If $\max(\max(\pi_1(X, q_X),\pi_1(Y, q_X))-\pi_1(p_X(m), q_X), 0)=\max(\pi_1(Y, q_X)-\pi_1(p_X(m), q_X), 0)\longrightarrow 0$ and $\max(\max(\pi_1(X, q_X),\pi_1(Y, q_X))-\pi_1(p'_X(m), q_X), 0)=\max(\pi_1(Y,q_X)-\pi_1(p'_X(m),q_X),0)\longrightarrow 0$, then $p_X(m)\longrightarrow 0$, $p'_X(m)\longrightarrow 0$ and $|p_X(m)-p'_X(m)|\longrightarrow 0$. 

Consider two sequences of $q_X$, $(q_X(m))_{m\geq 1}$ and $(q'_X(m))_{m\geq 1}$ such that $q_X(m)>0$ and $q'_X(m)>0$. If $\max(\max(\pi_2(p_X, X),\pi_2(p_X, Y))-\pi_2(p_X, q_X(m)), 0)=\max(\pi_2(p_X, Y)-\pi_2(p_X, q_X(m)), 0)\longrightarrow 0$ and $\max(\max(\pi_2(p_X, X),\pi_2(p_X, Y))-\pi_2(p_X, q'_X(m)), 0)=\max(\pi_2(p_X,Y)-\pi_2(p_X,q'_X(m)),0)\longrightarrow 0$, then $q_X(m)\longrightarrow 0$, $q'_X(m)\longrightarrow 0$ and $|q_X(m)-q'_X(m)|\longrightarrow 0$. 

Therefore, the payoff functions are sequentially locally non-constant.

%Thus, we can find a point $(p_X', q_X')$ in a neighborhood of any point $(p_X, q_X)$ satisfying $\pi_1(Y, q'_X)>\pi_1(p'_X, q'_X)$ or $\pi_2(p_X', Y)>\pi_2(p'_X, q'_X)$, and so the payoff functions of this game satisfy the sequential local non-constancy.

\begin{table}[htpb]
\begin{center}
\begin{tabular}{c|c|c|c|}
\multicolumn{4}{c}{\ \ \ \ \ \ \ \ \ \ \ \ \ \ \ \ Player 2} \\ \cline{2-4}
& &X&Y\\ \cline{2-4}
Player&X&2, 1 &0, 0\\ \cline{2-4}
1&Y& 0, 0& 1, 2\\ \cline{2-4}
\end{tabular}
\vspace{.3cm}
\caption{Example of game 2}
\label{ga-3}
\end{center}
\end{table}

Let us consider another example. See a game in Table \ref{ga-3}. It is an example of the so-called Battle of the Sexes Game. Notations are the same as those in the previous example. The expected payoffs of players are as follows;
\begin{align*}
\pi_1(p_X, q_X)&=2p_Xq_X+(1-p_X)(1-q_X)\\
&=1+p_X(3q_X-1)-q_X,
\end{align*}
\[\pi_1(X, q_X)=2q_X,\]
\[\pi_1(Y, q_X)=1-q_X,\]
\begin{align*}
\pi_2(p_X, q_X)&=p_Xq_X+2(1-p_X)(1-q_X)\\
&=2+q_X(3p_X-2)-2p_X,
\end{align*}
\[\pi_2(p_X, X)=p_X,\]
and
\[\pi_2(p_X, Y)=2-2p_X.\]
Then,
\begin{itemize}
\item When $q_X>\frac{1}{3}$, $\pi_1(X, q_X)>\pi_1(p_X, q_X)$ for $p_X<1$.
\item When $q_X<\frac{1}{3}$, $\pi_1(Y, q_X)>\pi_1(p_X, q_X)$ for $p_X>0$.
\item When $p_X>\frac{2}{3}$, $\pi_2(p_X, X)>\pi_2(p_X, q_X)$ for $q_X<1$ .
\item When $p_X<\frac{2}{3}$, $\pi_2(p_X, Y)>\pi_2(p_X, q_X)$ for $q_X>0$ .
\end{itemize}
We can select points in $\mathbf{P}$ in (1) of Definition \ref{sln2} which correspond to vertices of $\Delta$ so that $\pi_1(X, q_X)>\pi_1(p_X, q_X)$, $\pi_1(Y, q_X)>\pi_1(p_X, q_X)$, $\pi_2(p_X, X)>\pi_2(p_X, q_X)$ or $\pi_2(p_X, Y)>\pi_2(p_X, q_X)$ hold.

Consider sequences $(p_X(m))_{m\geq 1}$, $(p'_X(m))_{m\geq 1}$, $(q_X(m))_{m\geq 1}$ and $(q'_X(m))_{m\geq 1}$.
\begin{enumerate}
	\item When $p_X>\frac{2}{3}$, $q_X>\frac{1}{3}$, if $\max(\pi_1(X, q_X)-\pi_1(p_X(m), q_X),0)\longrightarrow 0$ and $\max(\pi_1(X, q_X)-\pi_1(p'_X(m), q_X),0)\longrightarrow 0$, then $p_X(m)\longrightarrow 1$, $p'_X(m)\longrightarrow 1$ and $|p_X(m)-p'_X(m)|\longrightarrow 0$.

 If $\max(\pi_2(p_X, X)-\pi_2(p_X, q_X(m)),0)\longrightarrow 0$ and $\max(\pi_2(p_X, X)-\pi_2(p_X, q'_X(m)),0)\longrightarrow 0$, then $q_X(m)\longrightarrow 1$, $q'_X(m)\longrightarrow 1$ and $|q_X(m)-q'_X(m)|\longrightarrow 0$.

	\item When $p_X<\frac{2}{3}$, $q_X<\frac{1}{3}$, if $\max(\pi_1(Y, q_X)-\pi_1(p_X(m), q_X),0)\longrightarrow 0$ and $\max(\pi_1(Y, q_X)-\pi_1(p'_X(m), q_X),0)\longrightarrow 0$, then $p_X(m)\longrightarrow 0$, $p'_X(m)\longrightarrow 0$ and $|p_X(m)-p'_X(m)|\longrightarrow 0$.

 If $\max(\pi_2(p_X, Y)-\pi_2(p_X, q_X(m)),0)\longrightarrow 0$ and $\max(\pi_2(p_X, Y)-\pi_2(p_X, q'_X(m)),0)\longrightarrow 0$, then $q_X(m)\longrightarrow 0$, $q'_X(m)\longrightarrow 0$ and $|q_X(m)-q'_X(m)|\longrightarrow 0$.

	\item When $p_X<\frac{2}{3}$, $q_X>\frac{1}{3}$, there exists no pair of sequences $(p_X(m))_{m\geq 1}$ and $(q_X(m))_{m\geq 1}$ such that $\max(\pi_1(X, q_X)-\pi_1(p_X(m), q_X),0)\longrightarrow 0$ and $\max(\pi_2(p_X, Y)-\pi_2(p_X, q_X(m)),0)\longrightarrow 0$.

	\item When $p_X>\frac{2}{3}$, $q_X<\frac{1}{3}$, there exists no pair of sequences $(p_X(m))_{m\geq 1}$ and $(q_X(m))_{m\geq 1}$ such that $\max(\pi_1(Y, q_X)-\pi_1(p_X(m), q_X),0)\longrightarrow 0$ and $\max(\pi_2(p_X, X)-\pi_2(p_X, q_X(m)),0)\longrightarrow 0$.

\item When $\frac{2}{3}-\varepsilon<p_X<\frac{2}{3}+\varepsilon$, $\frac{1}{3}-\varepsilon<q_X<\frac{1}{3}+\varepsilon$ with $0<\varepsilon<\frac{1}{3}$, if $\max(\pi_1(X, q_X)-\pi_1(p_X(m), q_X),0)\longrightarrow 0$, $\max(\pi_1(Y, q_X)-\pi_1(p_X(m), q_X),0)\longrightarrow 0$, $\max(\pi_2(p_X, X)-\pi_2(p_X, q_X(m)),0)\longrightarrow 0$ and $\max(\pi_2(p_X, Y)-\pi_2(p_X, q_X(m)),0)\longrightarrow 0$, then $(p_X(m), q_X(m))\longrightarrow (\frac{2}{3},\frac{1}{3})$ for all sequences $(p_X(m))_{m\geq 1}$ and $(q_X(m))_{m\geq 1}$.

The payoff functions are sequentially locally non-constant.
\end{enumerate}

\section{Concluding Remarks}

As a future research program we are studying the following themes.
\begin{enumerate}
	\item An application of the method of this paper to economic theory, in particular, the problem of the existence of an equilibrium in competitive economy with excess demand functions which have the property that is similar to sequential local non-constancy.
	\item A generalization of the result of this paper to Kakutani's fixed point theorem for multi-valued functions with property of sequential local non-constancy and its application to economic theory.
\end{enumerate}

\appendix
\section{Proof of Sperner's lemma}\label{app1}
We prove this lemma by induction about the dimension of $\Delta$. When $n=0$, we have only one point with the number 0. It is the unique 0-dimensional simplex. Therefore the lemma is trivial. When $n=1$, a partitioned 1-dimensional simplex is a segmented line. The endpoints of the line are labeled distinctly, by 0 and 1. Hence in moving from endpoint 0 to endpoint 1 the labeling must switch an odd number of times, that is, an odd number of edges labeled with 0 an 1 may be located in this way.

Next consider the case of 2 dimension. Assume that we have partitioned a 2-dimensional simplex (triangle) $\Delta$ as explained above. Consider the face of $\Delta$ labeled with 0 and 1\footnote{We call edges of triangle $\Delta$ \emph{faces} to distinguish between them and edges of a dual graph which we will consider later.}. It is the base of the triangle in Figure \ref{tria2}. Now we introduce a dual graph that has its nodes in each small triangle of $K$ plus one extra node outside the face of $\Delta$ labeled with 0 and 1 (putting a dot in each small triangle, and one dot outside $\Delta$). We define edges of the graph that connect two nodes if they share a side labeled with 0 and 1. See Figure \ref{tria2}. White circles are nodes of the graph, and thick lines are its edges. Since from the result of 1-dimensional case there are an odd number of faces of $K$ labeled with 0 and 1 contained in the face of $\Delta$ labeled with 0 and 1, there are an odd number of edges which connect the outside node and inside nodes. Thus, the outside node has odd degree. Since by the Handshaking lemma there are an even number of nodes which have odd degree, we have at least one node inside the triangle which has odd degree. Each node of our graph except for the outside node is contained in one of small triangles of $K$. Therefore, if a small triangle of $K$ has one face labeled with 0 and 1, the degree of the node in that triangle is 1; if a small triangle of $K$ has two such faces, the degree of the node in that triangle is 2, and if a small triangle of $K$ has no such face, the degree of the node in that triangle is 0. Thus, if the degree of a node is odd, it must be 1, and then the small triangle which contains this node is labeled with 0, 1 and 2 (fully labeled). In Figure \ref{tria2} triangles which contain one of the nodes $A$, $B$, $C$ are fully labeled triangles.

\begin{figure}[tpb]
\begin{center}
\includegraphics[height=9cm]{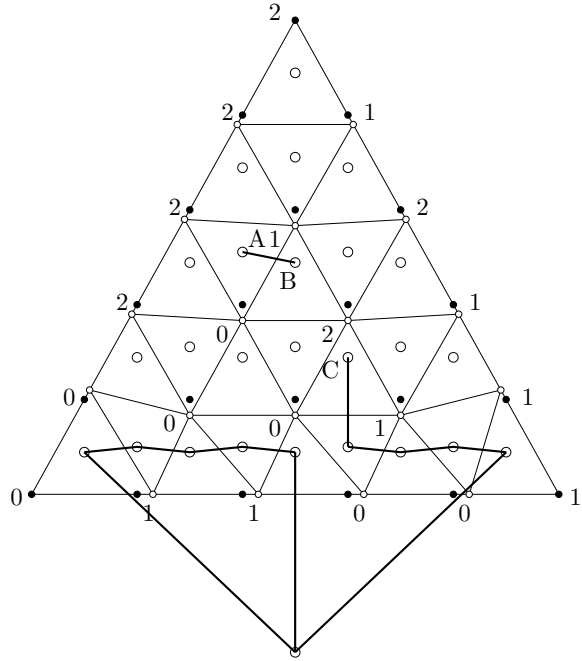}
\end{center}
	\vspace*{-0.13cm}
	\caption{Sperner's lemma}
	\label{tria2}
\end{figure}

Now assume that the lemma holds for dimensions up to $n-1$. Assume that we have partitioned an $n$-dimensional simplex $\Delta$. Consider the fully labeled face of $\Delta$ which is a fully labeled $n-1$-dimensional simplex. Again we introduce a dual graph that has its nodes in small $n$-dimensional simplices of $K$ plus one extra node outside the fully labeled face of $\Delta$ (putting a dot in each small $n$-dimensional simplex, and one dot outside $\Delta$). We define the edges of the graph that connect two nodes if they share a face labeled with 0, 1, $\dots$, $n-1$. Since from the result of $n-1$-dimensional case there are an odd number of fully labeled faces of small simplices of $K$ contained in the $n-1$-dimensional fully labeled face of $\Delta$, there are an odd number of edges which connect the outside node and inside nodes. Thus, the outside node has odd degree. Since, by the Handshaking lemma there are an even number of nodes which have odd degree, we have at least one node inside the simplex which has odd degree. Each node of our graph except for the outside node is contained in one of small $n$-dimensional simplices of $K$. Therefore, if a small simplex of $K$ has one fully labeled face, the degree of the node in that simplex is 1; if a small simplex of $K$ has two such faces, the degree of the node in that simplex is 2, and if a small simplex of $K$ has no such face, the degree of the node in that simplex is 0. Thus, if the degree of a node is odd, it must be 1, and then the small simplex which contains this node is fully labeled.

\begin{quote}
If the number (label) of a vertex other than vertices labeled with 0, 1, $\dots$, $n-1$ of an $n$-dimensional simplex which contains a fully labeled $n-1$-dimensional face is $n$, then this $n$-dimensional simplex has one such face, and it is a fully labeled $n$-dimensional simplex. On the other hand, if the number of that vertex is other than $n$, then the $n$-dimensional simplex has two such faces.
\end{quote}

We have completed the proof of Sperner's lemma.

Since $n$ and partition of $\Delta$ are finite, the number of small simplices constructed by partition is also finite. Thus, we can constructively find a fully labeled $n$-dimensional simplex of $K$ through finite steps.

\begin{description}
\item[Acknowledgement] This work was supported in part by the Ministry of Education, Science, Sports and Culture of Japan, Grant-in-Aid for Scientific Research (C), 20530165.
\end{description}

\bibliography{yasuhito}
\end{document}